\documentclass[reqno]{amsart}

\usepackage{graphicx}
\usepackage{amssymb}
\usepackage[all]{xy}
\usepackage[inline]{enumitem}

 \newtheorem{theorem}{Theorem}[section]
 \newtheorem{proposition}[theorem]{Proposition}

 \newcommand{\rank}{\mathop{\rm rank}\nolimits}

 \newcommand{\cat}{\mathop{\rm cat}\nolimits}
 \newcommand{\nil}{\mathop{\rm nil}\nolimits}
 \newcommand{\TC}{\mathop{\rm TC}\nolimits}

 \newcommand{\Ker}{\mathop{\rm Ker}\nolimits}

 \newcommand{\ZZ}{{\mathbb{Z}}}
 \newcommand{\QQ}{{\mathbb{Q}}}
 
 \newcommand{\CC}{{\mathbb{C}}}
 \newcommand{\HH}{{\mathbb{H}}}

	\newcommand{\FF}{{\mathbb{F}}}

 \parindent 0pt
 \parskip 1.5mm

\begin{document}

\title{Manifolds with small topological complexity}

\author[Petar Pave\v{s}i\'{c}]{Petar Pave\v si\'{c}$^{*}$}
\address{$^{*}$ Faculty of Mathematics and Physics, University of Ljubljana,
Jadranska 21,  1000 Ljubljana, Slovenija}
\email{petar.pavesic@fmf.uni-lj.si}
\thanks{$^{*}$ Supported by the Slovenian Research Agency program P1-0292 and grants 
N1-0083, N1-0064}

\date{\today}

\begin{abstract}
We study closed orientable manifolds whose topological complexity is at most 3 and determine their cohomology
rings. For some of admissible cohomology rings we are also able to identify corresponding manifolds up to
homeomorphism. 
\ \\[3mm]
{\it Keywords}: topological complexity, Lusternik-Schnirelmann category, closed manifold, 
zero divisor cup-length \\
{\it AMS classification: 55R70, 55M30} 
\end{abstract}

\maketitle


\section{Introduction}

Topological complexity of a space $X$, denoted $\TC(X)$ is a numerical homotopy invariant 
introduced by M. Farber \cite{Farber2003} as a quantitative measure for the complexity of motion planning in 
a configuration space $X$ of some robot device.
Although configuration spaces of robots can be quite general topological spaces
(cf. Kapovich-Millson \cite{KapovichMillson2002}, Pave\v si\'c \cite{Pavesic2018}),
of particular importance are those that have the structure of a closed manifold (e.g., ordered
configuration spaces of manifolds, see Cohen \cite{Cohen2018}, configuration spaces of spidery
linkages, see O'Hara \cite{OHara2007} and of general parallel mechanisms, see 
Shvalb-Shoham-Blanc \cite{ShvalbShohamBlanc2005}).
It is thus of interest to determine which closed manifolds $M$ have a given 
value of $\TC(M)$.
The case $\TC(M)=1$ is void, because a non-trivial closed manifold cannot be
contractible. Grant, Lupton and Oprea \cite[Corollary 1.2]{GrantLuptonOprea2013} showed that the only
closed manifolds with topological complexity equal to 2 are the odd-dimensional spheres. 
In this paper we will make a further step and study closed oriented manifolds $M$ with $\TC(M)=3$. Some
examples immediately spring to mind: even-dimensional spheres $S^{2n}$ by \cite[Theorem 8]{Farber2003}
and products of two odd-dimensional spheres, by \cite[Theorem 8 and Theorem 11]{Farber2003}. 
The question is, are there any other examples? We will give an answer in the main result of the next section,
Theorem \ref{thm:coho structure} in which we exactly describe admissible cohomology rings of manifolds 
whose topological complexity is at most 3. In Section 3 we discuss explicit manifolds whose cohomology 
ring is described in the mentioned theorem. 

For a topological space $X$ let $X^I$ denote the space of continuous paths 
$\alpha\colon I\to X$, and let $\pi\colon X^I\to X\times X$ be the evaluation map
$\pi(\alpha):=\big(\alpha(0),\alpha(1)\big)$. \emph{Topological complexity} of a path-connected
topological space $X$ is the least integer $\TC(X)=n$ for which there exists 
a covering $U_1,\ldots,U_n$ of $X\times X$, where each $U_i$ is open and admits a continuous 
section to the map $\pi\colon X^I\to X\times X$ (cf. \cite[Definition 2]{Farber2003}). Note that 
if $X$ is a compact ANR space (which includes closed manifolds) then the requirement that the sets in the covering are open is superfluous, since by \cite[Theorem 4.6]{Pavesic2019},
one can consider coverings of $X\times X$ by arbitrary subsets. Note also, that many authors 
(e.g., the above mentioned article \cite{GrantLuptonOprea2013}) use a normalized
or reduced topological complexity, which is one less that in our definition, 
so that the topological complexity of a contractible space is 0 instead of 1. 

The main properties of topological complexity are listed in the following proposition, where the value of 
$\TC(X)$ is related to the Lusternik-Schnirelmann category $\cat(X)$ (for which we refer to the classical
monograph \cite{CLOT}), and to the nilpotency of certain ideal in the cohomology ring of $X\times X$. 

\begin{proposition}\ \\[-6mm]
\label{prop:TC properties}
\begin{enumerate}
\item $\TC(X)=1$ if, and only if $X$ is contractible;
\item Homotopy invariance: $$X\simeq Y \Rightarrow \TC(X)=\TC(Y);$$
\item Category estimate: $$\cat(X)\le \TC(X)\le \cat(X\times X);$$
\item If $X$ is a topological group, then $\TC(X)=\cat(X)$;
\item Cohomological estimate: $$\TC(X)\ge \nil(\Ker\Delta^*),$$ where 
$\Delta^*\colon H^*(X\times X;R)\to H^*(X;R)$ 
is the homomorphism induced by the diagonal map
$\Delta\colon X\to X\times X$ on the cohomology with coefficients in a ring $R$, and $\nil(\Ker\Delta^*)$ is the minimal integer $k$ for which all
$k$-fold products in $\Ker\Delta^*$ are zero;
\item Product formula: $$\TC(X\times Y)\le \TC(X)+\TC(Y)-1.$$
\end{enumerate}
\end{proposition}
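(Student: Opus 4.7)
The plan is to recognize that $\TC(X)$ coincides with the sectional category $\sct(\Delta)$ of the diagonal $\Delta\colon X\to X\times X$, because $\pi\colon X^I\to X\times X$ is a fibrational replacement of $\Delta$ (the constant-path inclusion $X\hookrightarrow X^I$ is a homotopy equivalence commuting with $\Delta$ and $\pi$). With this identification, each of items (1)--(5) reduces to a standard property of sectional category.

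Specifically, for (1) a global section of $\pi$ yields a homotopy between the two projections $X\times X\to X$, and restricting to $\{x_0\}\times X$ contracts $X$; the converse is immediate. Item (2) is homotopy invariance of sectional category applied to $f\times f$ intertwining the diagonals. For (3), the bound $\cat(X)\le\TC(X)$ comes from restricting sections of $\pi$ over a cover $\{U_i\}$ of $X\times X$ to the slice $\{x_0\}\times X$, turning them into null-homotopies; the bound $\TC(X)\le\cat(X\times X)$ is the general fact $\sct(p)\le\cat(B)$ for any fibration with nonempty fibers. Item (4) uses left translation on $G$ to transport a local section near $(e,e)$ to any $(a,b)$, reducing the computation of $\TC(G)$ to the sectional category of the based path fibration over $G$, which is $\cat(G)$. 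For (5), on each $U_i$ admitting a section $s_i$, the inclusion $U_i\hookrightarrow X\times X$ is homotopic to $\Delta\circ\ev_0\circ s_i$, so every class in $\Ker\Delta^*=\Ker\pi^*$ restricts to zero on $U_i$; a standard relative cup product argument then shows that $n$-fold products of such classes vanish globally as soon as $n$ exceeds the size of the cover.

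The product formula (6) is the step I expect to be the main obstacle. Given covers $\{U_i\}_{i=1}^m$ and $\{V_j\}_{j=1}^n$ of $X\times X$ and $Y\times Y$ with sections of the respective path fibrations, the plan is to form $W_k=\bigcup_{i+j=k+1} U_i\times V_j$ for $k=1,\ldots,m+n-1$ and to assemble a section on each $W_k$ out of the products of the given sections. Making this rigorous requires refining the covers so that the pieces $U_i\times V_j$ contributing to a fixed $W_k$ either are pairwise disjoint or carry sections that agree on overlaps; this combinatorial refinement, together with the ANR hypothesis alluded to in the paper, is the only part of the proposition that does not follow mechanically from the sectional-category reformulation of $\TC$.
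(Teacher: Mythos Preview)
The paper does not actually supply a proof of this proposition: it is stated as a list of well-known properties (implicitly referring the reader to Farber \cite{Farber2003} and \cite{CLOT}) and the text immediately following it is expository, not a proof. So there is nothing in the paper to compare your argument against line by line.

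That said, your sketch is correct and follows exactly the standard arguments one finds in the literature. The identification $\TC(X)=\sct(\Delta)$ via the fibrational replacement $\pi$ is the right starting point, and items (1)--(5) are indeed immediate consequences of general facts about sectional category. Your handling of (6) is also the standard one; the refinement you worry about is genuinely needed, but it is routine rather than a real obstacle: one replaces the open covers by covers indexed so that the pieces $U_i\times V_j$ with $i+j$ fixed are pairwise disjoint (e.g.\ via a partition-of-unity argument as in Schwarz's original work, or as in Farber's proof of \cite[Theorem~11]{Farber2003}). No ANR hypothesis is required for this step. In short, your proposal is a faithful reconstruction of the proofs the paper is tacitly citing.
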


Recall that the value of $\Delta^*$ on the cross-product $u\times v\in H^*(X\times X;R)$ of elements 
$u,v\in H^*(X;R)$ can be given in terms of their cup-product as 
$$\Delta^*(u\times v)=u\cdot v,$$
and the cup-product of elements $u\times v$ and $u'\times v'$ is given as
$$ (u\times v)\cdot(u'\times v')=(-1)^{|v|\cdot |u'|} (u\cdot u')\times (v\cdot v'),$$
where $|v|,|u'|$ are the dimensions of cohomology classes $v$ and $u'$ (see \cite[pp. 215-216]{Hatcher2002}). 
This explains why Farber \cite[Definition 6]{Farber2003} called $\Ker \Delta^*$ the 
\emph{ideal of zero-divisors} of $H^*(X;R)$.
For every $u\in H^*(X;R)$ we have 
$$\Delta^*(u\times 1-1\times u)=u\cdot 1-1\cdot u=0,$$ 
therefore $(u\times 1-1\times u)\in\Ker \Delta^*$. Indeed, if $H^*(X;R)$ is a free $R$-module (which implies 
that 
$H^*(X\times X;R)\cong H^*(X;R)\otimes H^*(X;R)$ by Kunneth theorem), then one can easily show that 
$\Ker \Delta^*$ is generated as an ideal by elements of the form $(u\times 1-1\times u)$.

\section{Admissible cohomology rings}

Computation of topological complexity of closed surfaces was completed in the orientable case by Farber 
\cite[Theorem 9]{Farber2003} and in the non-orientable case by Dranishnikov \cite{Dranishnikov2017}
and Cohen and Vandembroucq \cite{CohenVandembroucq2017}. By mentioned results the only closed surfaces whose
topological complexity is 3 are the sphere $S^2$ and the torus $S^1\times S^1$. 
To avoid making unnecessary exceptions, from this point on let $M$ denote a closed, orientable $m$-dimensional
 manifold with $m\ge 3$. 

We will base our computations on the following corollary of a theorem proved by Dranishnikov, Katz 
and Rudyak \cite{DranishnikovKatzRudyak2008}. 

\begin{proposition}
If $\TC(M)\le 3$, then $\pi_1(M)$ is free. 
\end{proposition}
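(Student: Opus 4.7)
The plan is to extract the conclusion as a quick consequence of the theorem of Dranishnikov, Katz and Rudyak combined with the Stallings-Swan characterization of free groups. By Proposition~\ref{prop:TC properties}(3), one has $\cat(M) \le \TC(M) \le 3$, so the hypothesis translates immediately into an upper bound on the Lusternik-Schnirelmann category of $M$.

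Next, I would invoke the main theorem of Dranishnikov, Katz and Rudyak, which for a closed orientable $m$-manifold with $m \ge 3$ bounds the (integral) cohomological dimension of $\pi_1(M)$ in terms of $\cat(M)$. This inequality should be sharp enough that $\cat(M) \le 3$ forces $\mathrm{cd}(\pi_1(M)) \le 1$. Both orientability and $m \ge 3$ are genuinely used in the DKR argument, entering through Poincar\'e duality and through a comparison of $M$ with the classifying space $B\pi_1(M)$ along the classifying map $M \to B\pi_1(M)$; indeed, the torus $S^1 \times S^1$, whose $\TC$ equals $3$ and whose fundamental group $\ZZ^2$ is not free, shows that the hypothesis $m \ge 3$ cannot be dropped.

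Finally, I would appeal to Stallings-Swan: a finitely generated group of cohomological dimension at most $1$ is free. Since $M$ is a closed manifold, $\pi_1(M)$ is finitely presented, so the hypothesis is satisfied and $\pi_1(M)$ is free.

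The main obstacle is extracting the precise form of the DKR inequality required in the middle step. Their theorem is presumably phrased in terms of an intermediate invariant (such as category weight, or the essentialness of $M$ in the sense of Gromov, meaning that the classifying map sends the fundamental class to a nonzero element of $H_m(B\pi_1(M))$), and one must combine this carefully with $\cat(M)\le 3$ and with the dimension hypothesis $m \ge 3$ to distill the sharp bound $\mathrm{cd}(\pi_1(M)) \le 1$, rather than a weaker estimate (say $\mathrm{cd}(\pi_1(M)) \le 2$) which would not suffice to invoke Stallings-Swan.
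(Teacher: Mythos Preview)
Your approach is essentially the paper's: reduce to $\cat(M)\le 3$ via Proposition~\ref{prop:TC properties}(3), then invoke \cite[Theorem~1.1]{DranishnikovKatzRudyak2008}. That theorem already states the freeness of $\pi_1(M)$ as its conclusion (the cohomological-dimension bound together with Stallings--Swan is precisely how DKR prove it internally), so your worry about ``extracting the precise form of the DKR inequality'' is unnecessary---you may simply cite the finished statement.
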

\begin{proof}
If $\TC(M)\le 3$, then $\cat(M)\le 3$ by Proposition \ref{prop:TC properties}(3), and then 
the claim follows immediately from \cite[Theorem 1.1]{DranishnikovKatzRudyak2008}.
\end{proof}

We will proceed by considering several cases and sub-cases. Let $g$ denote the 
generator of $H^m(M;R)$ and for every $u\in H^*(M;R)$ let
$$\widehat u:=u\times 1-1\times u \in H^*(M\times M;R)$$ 
be the shorthand for the corresponding zero-divisor.

(1) Let us first assume that the rank of $\pi_1(M)$ is at least 2 and consider the cup-product pairing 
$$ H^1(M;\ZZ)\times H^{m-1}(M;\ZZ)\stackrel{\cdot}{\longrightarrow} H^m(M;\ZZ)$$
which is non-singular by \cite[Proposition 3.38]{Hatcher2002}. Since the rank of $H^1(M;\ZZ)$
is at least 2, non-singularity of the pairing implies that there exist linearly
independent elements $u,v\in H^1(M;\ZZ)$ and $u',v'\in H^{m-1}(M;\ZZ)$ such that
$u\cdot u'=v\cdot v'=g$, and 
furthermore $u\cdot v'=v\cdot u'=0$. Then we have the following non-trivial four-fold product
$$\widehat{u}\cdot\widehat{u'}\cdot\widehat{v}\cdot\widehat{v'}=2 (g\times g)\ne 0.$$
Therefore, if $\rank(\pi_1(M))\ge 2$, then $\TC(M)\ge 5$ by Proposition \ref{prop:TC properties}(6). 

(2) If $\pi_1(M)\cong\ZZ$ let $u$ be a generator of $H^1(M;\ZZ)\cong\ZZ$ and let, as before, 
$v\in H^{m-1}(M;\ZZ)$ be such that $u\cdot v=g$. If $m-1$ is even, then we have
$$\widehat{v}^2\cdot\widehat{u}=-2(v\times v)\cdot\widehat{u}=-2(g\times u-u\times g)\ne 0,$$
(note that $v^2=0$ for dimensional reasons) and thus $\TC(M)\ge 4$ by Proposition 
\ref{prop:TC properties}(6). On the other hand, if $m-1$ is odd, and if there exists 
a non-zero element $w\in H^i(M;\ZZ)$
for some $2\le i\le m-2$, then 
$$\widehat{u}\cdot\widehat{v}\cdot\widehat{w}=w\times g-g\times w\pm uw\times v-v\times uw\ne 0, $$
so again $\TC(M)\ge 4$. 

We conclude that if $\pi_1(M)\cong\ZZ$ and $\TC(M)=3$, then $H^*(M;\ZZ)$ is generated by 
two cohomology classes in dimensions 1 and $m-1$, which are Poincar\'e duals to each other, and furthermore 
$m-1$ must be odd. In other words, $H^*(M;\ZZ)\cong \bigwedge(x_1,x_k)$ for some odd integer $k>1$.

(3) If $M$ is simply-connected, then we consider four sub-cases according to the structure of 
the group $$\widehat{H}(M;R):=\bigoplus_{i=2}^{m-2} H_i(M;R).$$

(3a) If $\widehat H(M;\QQ)\ne 0$ we argue similarly as in case (2). First of all we note that 
$H^*(M;\ZZ)$ is not all torsion, so by \cite[Corollary 3.39]{Hatcher2002} we may find elements 
$u,v\in H^*(M;\ZZ)$ of infinite order, such that $u\cdot v=g$. As in case (2), if 
either $u$ or $v$ is of even degree, then we can find a non-trivial product of three zero-divisors, 
and then $\TC(M)\ge 4$. Therefore, if $\TC(M)\le 3$, 
then  both $u$ and $v$ must be of odd degree, which as before implies that $H^*(M;\ZZ)$ contains 
a subring 
of the form $\bigwedge(x_k,x_l)$ where $k,l$ are odd integers and $1<k\le l< m-1$. 
Furthermore, if there exists an element $w\in H^*(M;\ZZ)$ which is not contained in the 
mentioned subring, then $\widehat{u}\cdot\widehat{v}\cdot\widehat{w}\ne 0$ similarly as
in the second part of case (2). Thus, $\widehat H(M;\QQ)\ne 0$ and $\TC(M)=3$ imply 
$H^*(M;\ZZ)\cong \bigwedge(x_k,x_l)$.

(3b) Let us now assume that $\widehat H(M;\QQ)=0$ but $\widehat H(M;\FF_p)\ne 0$ for some 
odd prime $p$, and let $k$ be the minimal $k\ge 2$ for which $H_k(M;\ZZ)$ has $p$-torsion. 
By the universal coefficient theorem for cohomology (see \cite[Theorem 3.2]{Hatcher2002})
$H^i(M;\FF_p)\ne 0$ for $i=k,k+1$. It then follows by Poincar\'e duality that $H^i(M;\FF_p)\ne 0$ for 
$i=m-k-1,m-k$. Therefore, $H^i(M;\FF_p)\ne 0$ in three different dimensions, unless 
$m=2k+1$. In the first case, we may find (as in case (2)) three non-trivial cohomology classes $u,v,w$ 
of different dimension (with $u\cdot v=g$ by \cite[Corollary 3.39]{Hatcher2002}), for which 
$\widehat{u}\cdot\widehat{v}\cdot\widehat{w}\ne 0$ and thus $\TC(M)\ge 4$. 

On the other hand, if $m=2k+1$, then let $u\in H^k(M;\FF_p)$ and $v\in H^{k+1}(M;\FF_p)$ be
such that $u\cdot v=g$. If $k$ is even, then 
$$\widehat{u}^2\cdot\widehat{v}=2(u\times g-g\times u)+v\times u^2-u^2\times v\ne 0.$$
Similarly, if $k$ is odd, then $\widehat{u}\cdot\widehat{v}^2\ne 0$, so in both cases $\TC(M)\ge 4$.

(3c) The next sub-case arises if $\widehat H(M;\QQ)=0$ and $\widehat H(M;\FF_p)= 0$ for $p$ odd but  
$\widehat H(M;\FF_2)\ne 0$. The argument is similar as in (3b), except if $m=2k+1$, since in that
case the proof that $\widehat{u}^2\cdot\widehat{v}\ne 0$ for $k$ even (or that 
$\widehat{u}\cdot\widehat{v}^2\ne 0$ for $k$ odd) breaks down because of 2-torsion. 
On the other hand, if $u\in H^k(M;\FF_2)$ and $v\in H^{k+1}(M;\FF_2)$ 
such that $u\cdot v=g$, and if additionally $u^2\ne 0$, then 
$$\widehat{u}^2\cdot\widehat{v}=u^2\times v+v\times u^2\ne 0,$$
which implies $\TC(M)\ge 4$. Therefore, under the assumptions of (3c), if $\TC(M)=3$ then
$H^*(M;\FF_2)\cong \bigwedge(x_k,x_{k+1})\otimes \FF_2$.

(3d) The final possibility is that $\widehat{H}(M;R)=0$ for all coefficient rings $R$,
which clearly implies that $H^*(M;\ZZ)\cong \bigwedge(x_k)$.      

We may summarize the above discussion in the following theorem:

\begin{theorem}
\label{thm:coho structure}
Assume that $M$ is a closed, orientable manifold with $\TC(M)\le 3$. Then $\pi_1(M)$ is a free group 
and one of the following alternatives holds:\\
(1) $H^*(M;\ZZ)\cong \bigwedge(x_k)$, $1\le k$, or \\
(2) $H^*(M;\ZZ)\cong \bigwedge(x_k,x_l)$, $k,l$ odd, $1\le k\le l$, or\\
(3) $H_i(M;\ZZ)=0$ for $i\ne 0,k,m$ and $H^*(M;\FF_2)\cong \bigwedge(x_k,x_{k+1})\otimes \FF_2$.
\end{theorem}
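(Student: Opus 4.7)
The plan is to organise the case analysis immediately preceding the theorem into a clean proof, driven throughout by the cohomological lower bound $\TC(M)\ge\nil(\Ker\Delta^*)$ from Proposition \ref{prop:TC properties}(5) combined with Poincar\'e duality: every extra piece of cohomology is to be converted into a long product of zero-divisors, and the hypothesis $\TC(M)\le 3$ then forbids it. Freeness of $\pi_1(M)$ is already delivered by the preceding proposition, so the remaining task is to restrict the cohomology ring of $M$.

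First I would branch on $\rank\pi_1(M)$. If it is at least $2$, the non-singular pairing $H^1(M;\ZZ)\times H^{m-1}(M;\ZZ)\to H^m(M;\ZZ)$ produces two Poincar\'e-dual pairs $(u,u'), (v,v')$ with $uv'=vu'=0$ and $uu'=vv'=g$, and then $\widehat{u}\cdot\widehat{u'}\cdot\widehat{v}\cdot\widehat{v'}=2(g\times g)\ne 0$ violates $\TC(M)\le 3$. Hence $\pi_1(M)$ is trivial or infinite cyclic. For $\pi_1(M)\cong\ZZ$, the generator $u\in H^1$ and its Poincar\'e dual $v\in H^{m-1}$, together with a parity argument on $m-1$ (forced by $\widehat{v}^2\widehat{u}\ne 0$ when $m-1$ is even) and a ``no third class'' argument ($\widehat{u}\widehat{v}\widehat{w}\ne 0$ for any intermediate $w$), yield case (2) with $k=1$ and $l=m-1$ odd.

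For $M$ simply connected I would stratify by whether $\widehat{H}(M;R):=\bigoplus_{i=2}^{m-2}H_i(M;R)$ vanishes over $\QQ$, over each $\FF_p$ with $p$ odd, over $\FF_2$, or over every coefficient ring. Over $\QQ$ the $\ZZ$-analysis repeats, using \cite[Corollary 3.39]{Hatcher2002} to produce infinite-order duals $u,v$ with $uv=g$ and again forcing both degrees odd and no third class, giving case (2). Over odd $\FF_p$, Poincar\'e duality and universal coefficients either supply three distinct intermediate dimensions carrying non-trivial classes (killed by $\widehat{u}\widehat{v}\widehat{w}$) or else force $m=2k+1$, in which case $\widehat{u}^2\widehat{v}$ or $\widehat{u}\widehat{v}^2$ finishes the job using $2\ne 0$ in $\FF_p$. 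When $\widehat{H}(M;R)=0$ for every ring $R$, universal coefficients collapse $M$ to an integral homology sphere, which is case (1) with $k=m$.

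The main obstacle is the mod-$2$ sub-case, where $2=0$ destroys the squaring trick; the surviving obstruction is $\widehat{u}^2\widehat{v}=u^2\times v+v\times u^2$, non-zero precisely when $u^2\ne 0$. Ruling this out is what forces the exterior description $H^*(M;\FF_2)\cong\bigwedge(x_k,x_{k+1})\otimes\FF_2$ of case (3), and is also the reason that case (3) survives only over $\FF_2$ rather than over $\ZZ$. Assembling the four alternatives completes the theorem.
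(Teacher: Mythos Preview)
Your proposal is correct and follows exactly the paper's approach; indeed, you explicitly frame it as organising ``the case analysis immediately preceding the theorem'', which \emph{is} the paper's proof. The branching on $\rank\pi_1(M)$, the four simply-connected sub-cases stratified by the first coefficient ring $R$ with $\widehat{H}(M;R)\ne 0$, and the particular zero-divisor products $\widehat{u}\,\widehat{u'}\,\widehat{v}\,\widehat{v'}$, $\widehat{v}^{2}\widehat{u}$, $\widehat{u}\,\widehat{v}\,\widehat{w}$, and $\widehat{u}^{2}\widehat{v}$ all coincide with the paper's argument.
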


\section{Some manifolds with small TC}

Theorem \ref{thm:coho structure} clearly shows that the condition $\TC(M)\le 3$ is much more restrictive than 
the analogous condition $\cat(M)\le 3$. Indeed the class of manifolds whose Lusternik-Schnirelmann category 
is at most 3 includes all surfaces, two-fold products of sphere, all $(n-1)$-connected $2n$-manifolds
and a variety of other examples. In this section we will try to collect some information about the
actual manifolds $M$ satisfying $\TC(M)\le 3$. For some cohomology rings we will be able to determine 
exactly the corresponding manifolds, while in other cases we will only present suitable candidates and compute
their Lusternik-Schnirelmann category.

{\bf 1.} The simplest case to consider are manifolds whose cohomology ring is given by Theorem \ref{thm:coho structure}(1).
In fact it is straightforward that $H^*(M;\ZZ)\cong \bigwedge(x_k)$ implies that $M$ is homotopy
equivalent to $S^k$. The positive solution to the Poincar\'e conjecture then implies that $M$ is actually
homeomorphic to $S^k$.
\\[3mm] \
{\bf 2.} If $H^*(M;\ZZ)\cong \bigwedge(x_1,x_k)$ as in Theorem \ref{thm:coho structure}(2), then (since 
$S^1\simeq K(\ZZ,1)$) there is a map $f_1\colon M\to S^1$ which represents 
$$x_1\in H^1(M;\ZZ)\cong [M,S^1].$$
Similarly, there exists a map $f_k\colon M\to K(\ZZ,k)$ representing the cohomology class
$$x_k\in H^k(M;\ZZ)\cong [M,K(\ZZ,k)].$$
It is well-known that $K(\ZZ,k)$ can be obtained by attaching  cells of dimension bigger or equal 
to $k+2$ to the sphere $S^k$. Since the dimension of $M$ is $m=k+1$, we may assume by cellular approximation
 theorem
that the image of $f_k$ is contained in $S^k$, and so we have a map $f\colon X\to S^k$. It is easy to check
that the resulting map 
$$(f_1,f_k)\colon M\to S^1\times S^k$$
is an isomorphism on the integral cohomology and thus a homotopy equivalence, since $\pi_1(M)\cong\ZZ$.  
Then a result of Kreck and L\"uck \cite[Theorem 0.13(a)]{KreckLuck2008} implies that $M$ is actually
homeomorphic to $S^1\times S^k$.
\\[3mm] \
{\bf 3.} If $H^*(M;\ZZ)\cong \bigwedge(x_k,x_k)$ with $k$ odd, then $M$ is a $(k-1)$-connected 
$2k$-dimensional manifold
and one has C.T.C. Wall's classification \cite{Wall1962} by which $M\approx S^k\times S^k$ provided 
$k\cong 3,5,7 (\mathrm{mod}\  8)$ (cf. also \cite[Theorem 3.1]{Bokoretal2019}).
\\[3mm] \
{\bf 4.} The instances of  Theorem \ref{thm:coho structure}(2) when $H^*(M;\ZZ)\cong \bigwedge(x_k,x_l)$ for 
$1< k < l$ with $k,l$ odd are more complicated. The products of odd spheres of the form $S^k\times S^l$ 
have the abovementioned cohomology ring and $\TC(S^k\times S^l)=3$. Note that by the above-mentioned result of 
Kreck and L\"uck, a manifold that is homotopy equivalent to a product of odd spheres is actually 
homeomorphic to that product. Another example is 
the special unitary group $SU(3)$ whose cohomology  is $H^*(SU(3);\ZZ)\cong \bigwedge(x_3,x_5)$.
Indeed, Singhof \cite[Theorem 1(a)]{Singhof1975} proved that $\cat(SU(3))=3$, therefore by Proposition 
\ref{prop:TC properties}(4), we have $\TC(SU(3))=3$, as well.

Another potential candidate is the symplectic group $Sp(2)$ whose cohomology is 
$H^*(Sp(2);\ZZ)\cong \bigwedge(x_3,x_7)$. However, Schweitzer \cite{Schweitzer1965} used secondary cohomology operations to prove 
that $\cat(Sp(2))=4$, which in turn implies that $\TC(Sp(2))=4$. Hilton and Roitberg 
\cite{HiltonRoitberg1969} discovered
three more examples of H-spaces whose cohomology is isomorphic to $\bigwedge(x_3,x_7)$, which are usually 
denoted  $E_{3\omega}, E_{4\omega}, E_{5\omega}$. 
Their Lusternik-Schnirelmann category (and thus topological complexity) is equal to 4 (see 
\cite[Chapter 4]{CLOT}).

More generally, let us consider fibre bundles $p\colon M\to S^l$ with fibre $S^k$ for 
odd integers $1<k< l$. The cohomology of $M$ is easily computed using Gysin sequence, so we obtain 
$H^*(M;\ZZ)\cong \bigwedge(x_k,x_l)$ and the manifold itself admits a CW-decomposition of the form
$$M=S^k\cup_\alpha e^l\cup_\beta  e^{k+l},$$
with attaching maps $\alpha\colon S^{l-1}\to S^k$ and $\beta\colon S^{k+l-1}\to S^k\cup_\alpha e^l$.
If $\alpha$ is a suspension (e.g, if $l<2k-1$ so that $\pi_{l-1}(S^k)$ is in the stable range), then 
$\cat(S^k\cup_\alpha e^l)=2$ and therefore $\cat(M)=3$. This includes important special cases as the
complex and quaternionic Stiefel manifolds, $V_2(\CC^n)=U(n)/U(n-2)$ whose cohomology ring is 
$H^*(V_2(\CC^n);\ZZ)\cong\bigwedge(x_{2n-1},x_{2n-3})$, and 
$V_2(\HH^n)=Sp(n)/Sp(n-2)$ with $H^*(V_2(\HH^n);\ZZ)\cong\bigwedge(x_{4n-1},x_{2n-5})$. 

If the attaching map $\alpha$ is not a suspension, then $\cat(S^k\cup_\alpha e^l)=3$. In that case 
$\cat(M)=3$ if, and only if certain set of Hopf invariants 
$\mathcal{H}(\beta)$ contains the zero class (see Chapter 6 of \cite{CLOT}, in particular Theorem 6.19). 
As we see, there are many sphere bundles over spheres, whose category is 3. Unfortunately, we 
are currently lacking a general method to determine their topological 
complexity, so this remains an interesting open problem. One possible approach is to apply
certain higher  Hopf invariants, a method that was recently developed by Gonzalez, Grant and 
Vandembroucq \cite{GonzalezGrantVandembroucq2019}. 
They managed to compute topological complexity of many two-cell complexes, but the technical details are quite
formidable, and the full analysis of three-cell complexes is probably very hard. 
Nevertheless, we were able to combine some of their computations with our results from \cite{Pavesic2020}
that relate topological complexity of a space with topological complexity of its skeleta, to show that 
some sphere bundles over spheres have 
topological complexity at least 4. We will work in the so-called \emph{meta-stable range} and assume that
 $2k<l<3k-1$. Under this assumption one can associate to every map $\alpha\colon S^{l-1}\to S^k$  
a \emph{generalized Hopf invariant} 
$H_0(\alpha)\colon S^{l-1}\to S^{2k-1}$ (see \cite[Section 5]{GonzalezGrantVandembroucq2019} for relevant
definitions and results), which allow to determine $\TC(S^k\cup_\alpha e^l)\ge 4$. 

\begin{proposition}
Let $k$ be an odd integer and let $2k<l<3k-1$. Assume that $M$ has a CW-decomposition of the form 
$M=S^k\cup_\alpha e^l\cup_\beta  e^{k+l}$ with attaching maps $\alpha\colon S^{l-1}\to S^k$ and 
$\beta\colon S^{k+l-1}\to S^k\cup_\alpha e^l$ (this in particular applies if $M$ is an $S^l$-bundle
over $S^k$). 
If $H_0(\alpha)\ne 0$, then $\TC(M)\ge 4$.
\end{proposition}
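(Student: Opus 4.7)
The plan is to combine two ingredients already advertised in the surrounding discussion: the generalized Hopf invariant criterion for two-cell complexes from Gonz\'alez, Grant and Vandembroucq \cite{GonzalezGrantVandembroucq2019}, and the skeleta-comparison results for $\TC$ from \cite{Pavesic2020}. Let $X:=S^k\cup_\alpha e^l$, which is the $l$-skeleton of $M$ in the given CW structure; then $M$ is obtained from $X$ by attaching a single top cell $e^{k+l}$ along $\beta$.

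The first step is to bound $\TC(X)$ from below. Because $X$ has non-trivial cohomology only in degrees $0$, $k$ and $l$ and because $k+k<l<k+l$ by the meta-stable assumption $2k<l<3k-1$, every cup product of positive-degree classes in $H^*(X)$ vanishes, so the zero-divisor cup-length is useless here. This is exactly the situation in which the generalized Hopf invariant $H_0(\alpha)\in\pi_{l-1}(S^{2k-1})$ constructed in \cite[Section 5]{GonzalezGrantVandembroucq2019} is designed to act: in the meta-stable range, they show that non-vanishing of $H_0(\alpha)$ forces $\TC(X)\ge 4$ (equivalently, reduced $\TC$ at least $3$), by producing a suitable non-trivial obstruction to a motion-planning section. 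So from $H_0(\alpha)\ne 0$ we conclude $\TC(X)\ge 4$.

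The second step is to transfer this lower bound from the $l$-skeleton $X$ to $M$. For this I would invoke the skeleta results of \cite{Pavesic2020}, which give conditions under which $\TC(Y)\ge \TC(Y^{(n)})$ for a CW-complex $Y$ and an appropriate skeleton $Y^{(n)}$. Here only one additional cell, of dimension $k+l$, is being attached, and $k+l$ exceeds both $l$ and twice the connectivity of $X$, so the attachment of $e^{k+l}$ along $\beta$ does not affect the homotopy-theoretic data ($\alpha$ and hence $H_0(\alpha)$) that witnesses $\TC(X)\ge 4$. Applied to the inclusion $X=M^{(l)}\hookrightarrow M$, the skeleta comparison then gives $\TC(M)\ge\TC(X)\ge 4$, as required. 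The parenthetical remark in the statement, that this applies when $M\to S^k$ is an $S^l$-bundle, is immediate from the standard CW decomposition of such a bundle as $S^k\cup_\alpha e^l\cup e^{k+l}$.

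The main obstacle is the second step: the skeleton inequality $\TC(M)\ge\TC(M^{(l)})$ is not true in general (attaching a cell can in principle lower $\TC$ by killing a troublesome homotopy class), so I would need to check that the precise hypotheses of the relevant theorem in \cite{Pavesic2020} are met in the meta-stable range $2k<l<3k-1$, where the top attaching map $\beta$ has target $X$ but its effect on motion-planning sections can be controlled. Verifying this compatibility, and in particular that the obstruction detected by $H_0(\alpha)$ survives to $M$ rather than being trivialized by $\beta$, is the only non-routine point in the argument.
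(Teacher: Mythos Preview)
Your strategy is exactly the paper's: bound $\TC(X)$ from below via \cite[Theorem 5.6]{GonzalezGrantVandembroucq2019}, then push the bound up to $M$ using the skeleta results of \cite{Pavesic2020}. The only thing you have not pinned down is the ``non-routine point'' you flag at the end, and here is how the paper handles it.

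The relevant result is \cite[Theorem 3.1]{Pavesic2020}, whose hypothesis is the numerical inequality
\[
2\dim(X)\;<\;(\conn M)\cdot\bigl(\TC(M)-1\bigr)\;+\;N,
\]
where $N$ is the degree of the equivalence given by the skeletal inclusion (here $N=k+l-1$, since $X$ is the $(k+l-1)$-skeleton of $M$) and $\conn M=k-1$ is replaced effectively by $k$ in the paper's statement. The point you missed is that $\TC(M)$ appears on the right, so one must first secure a lower bound for $\TC(M)$ \emph{before} invoking the theorem. The paper does this via category: \cite[Theorem 3.6]{Pavesic2020} gives $\cat(M)\ge\cat(X)=3$, hence $\TC(M)\ge 3$. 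Plugging $\TC(M)\ge 3$ into the inequality yields $2l<2k+(k+l-1)$, i.e.\ $l<3k-1$, which is precisely the upper end of the meta-stable hypothesis. So the role of the bound $l<3k-1$ is exactly to make the skeleta-comparison theorem applicable once $\TC(M)\ge 3$ is known; there is no separate argument about $\beta$ ``not trivializing'' the obstruction.
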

\begin{proof} Note that the inclusion $S^k\cup_\alpha e^l\hookrightarrow M$ is a $(k+l-1)$-equivalence
because $S^k\cup_\alpha e^l$ is the $(k+l-1)$-skeleton of $M$. 
Topological complexity of $S^k\cup_\alpha e^l$ was bounded from below in 
\cite[Theorem 5.6]{GonzalezGrantVandembroucq2019}: $\TC(S^k\cup_\alpha e^l)\ge 4$. 
On the other hand, \cite[Theorem 3.6]{Pavesic2020} implies that $\cat(M)\ge \cat(S^k\cup_\alpha e^l)=3$,
therefore $\TC(M)\ge 3$. Then we may apply
\cite[Theorem 3.1]{Pavesic2020}, which states that if 
$$2\dim(S^k\cup_\alpha e^l)< k\,(\TC(M)-1)+(k+l-1)$$
(which is clearly satisfied if $l<3k-1$),
then $\TC(M)\ge \TC(S^k\cup_\alpha e^l)\ge 4$.
\end{proof}

Observe that the first 
'undecided' cases arise in dimension 10, which is of interest if one considers
configuration spaces of specific mechanical systems. 

From a different perspective, one may also consider the Morse decomposition of a manifold 
$M$ with $H^*(M;\ZZ)\cong \bigwedge(x_k,x_l)$
as above. Smale \cite[Theorem G]{Smale1961} showed that if dimension of $M$ is at least 6, than it has a Morse
decomposition with the minimal number of handles compatible with its homology. Therefore, 
$M$ admits a decomposition with four handles whose indices are $0,k,l$ and $k+l$ respectively. 
The union of the $0$- and $k$-handles depends on the framing which is given by an element of $\pi_{k-1}(O(l))$.
This group is known to be trivial for $k\not\equiv 1(\!\!\!\mod 8)$, therefore the union of the first two 
handles is homeomorphic to $S^k\times D^l$. By the same argument the union of the $l$- and $(k+l)$-handles
is also homeomorphic to $S^k\times D^l$. We may conclude that under these assumptions
($\TC(M)=3$, $H^*(M;\ZZ)\cong \bigwedge(x_k,x_l)$ with $k\not\equiv 1(\!\!\!\mod 8)$) the manifold
$M$ can be obtained by glueing together two copies of $S^k\times D^l$ along the common 
boundary $S^k\times S^l$.
\\[3mm] \
{\bf 5.} Let us finally consider manifolds that satisfy condition (3) of Theorem \ref{thm:coho structure}. 
The lowest 
dimensional case is a simply-connected 5-dimensional manifold whose $\FF_2$ cohomology is 
$H^*(M;\FF_2)\cong \bigwedge(x_2,x_3)\otimes\FF_2$. Barden \cite{Barden1965} showed that every simply-connected
5-dimensional manifolds can be decomposed as a connected sum of certain basic 5-manifolds. We are not 
dwelling into details but one can easily check that the only 5-manifold that satisfies the above condition
is the famous Wu manifold $SU(3)/SO(3)$. It admits a CW-decomposition $SU(3)/SO(3)=S^2\cup e^3\cup e^5$, where 
the 3-cell is attached by a degree 2 map, therefore the 3-skeleton of $SU(3)/SO(3)$ is the Moore space
$M(\ZZ/2,2)$. The category of a Moore space is 2, therefore the category of the Wu manifold is 3. 
However, we were not able to check whether its topological complexity is also 3. One can construct higher
analogues of the Wu manifold using handle decompositions, for example by gluing together two copies of a 
(twisted of untwisted, depending on the dimension) $D^{k+1}$-bundle over $S^k$ along a suitable 
homeomophism of the boundary. All of these spaces have a CW-decomposition with the top-cell attached to 
a suspension, so their category is equal to 3. 

\

We should also mention an interesting result that was recently proved by S. Mescher 
\cite[Proposition 6.2]{Mescher2020}. He used weighted 
cohomology classes to show that a closed oriented manifold $M$ with $\TC(M)\le 3$ is either a rational
homology sphere or it admits a degree 1 map from a closed oriented manifold of the form $S^1\times P$
(i.e., it is \emph{dominated} by some product of a manifold of dimension $\dim(M)-1$ with a circle). 

\

Let us conclude with a brief discussion on two possible extensions of the presented results. Theorem 
\ref{thm:coho structure} gives a precise description of cohomology rings of closed orientable 
manifolds whose topological complexity is at most 3, so it is natural to ask what can be said 
about non-orientable closed manifolds $M$ with $\TC(M)\le 3$. As in the orientable case, the fundamental group
$\pi_1(M)$ must be free. That rank of $\pi_1(M)$ cannot exceed 1 can be seen similarly as in Section 2. 
On the other hand, $\pi_1(M)$ cannot be trivial, because $M$ is non-orientable. We thus conclude 
that $H^*(M;\FF_2)\cong \bigwedge(x_1,x_{m-1})\otimes \FF_2$, and the corresponding manifolds are the
generalized Klein-bottles (non-orientable $S^{m-1}$-bundles over $S^1$). Their category is 3 but we 
do not know  whether their topological complexity can be, at least in some cases, also equal to 3.

Another extension that could be pursued is determination of manifolds whose topological complexity is at 
most 4. Although the general case seems to be beyond reach because we have very little information on 
manifolds whose category is $4$, we believe that some reasonable progress could be achieved
on closed manifolds $M$ satisfying $\TC(M)\le 4$ and $\cat(M)\le 3$.

\end{document}